\numberwithin{equation}{section}
\definecolor{astral}{RGB}{46,116,181}
\DeclareMathAlphabet{\mathpzc}{OT1}{pzc}{m}{it}
\DeclareFontFamily{OT1}{pzc}{}
\DeclareFontShape{OT1}{pzc}{m}{it}{<-> s * [0.900] pzcmi7t}{}
\DeclareMathAlphabet{\mathpzc}{OT1}{pzc}{m}{it}
\newlength{\dhatheight}
\DeclareMathAlphabet\mathbfcal{OMS}{cmsy}{b}{n}
\newcommand{\core}[1]{#1^{\tiny{\textcircled{\tiny\#}}}}
\newcommand{\dc}[1]{#1e,{\tiny{\textcircled{\tiny {\#}}}}}
\definecolor{darkslategray}{rgb}{0.18, 0.31, 0.31}
\definecolor{warmblack}{rgb}{0.0, 0.26, 0.26}
\def\BState{\State\hskip-\ALG@thistlm}
\newtheorem{theorem}{Theorem}[section]
\newtheorem{lemma}[theorem]{Lemma}
\theoremstyle{definition}
\newtheorem{remark}{Remark}[section]
\journal{ Communications in Algebra}
\begin{document}

\begin{frontmatter}

\title{On forward-order law for core inverse in rings}

\vspace{-.4cm}

\author{Amit Kumar$^b$ and Debasisha Mishra$^{*b}$}

 \address{

                          $^b$Department of Mathematics,\\
                        National Institute of Technology Raipur, India.
                        \\email: amitdhull513@gmail.com  \\
                        email$^*$: dmishra\symbol{'100}nitrr.ac.in. \\}
\vspace{-2cm}

\begin{abstract}
If $a$ and $b$ are a pair of invertible elements, then $ab$ is also invertible and the inverse
of the product $ab$ satisfying 
  $$(ab)^{-1}=a^{-1}b^{-1}$$ is known as the {\it forward-order law}.
 This article establishes a few sufficient conditions of the forward-order law for the core inverse of elements in rings with involution. It also presents the forward-order law for the weighted core inverse and the triple forward-order law  for the core inverse. Additionally,  we discuss the hybrid forward-order law among the Moore-Penrose inverse, the group inverse, and the core inverse.
 \end{abstract}

\begin{keyword} Core inverse; Weighted core inverse; Forward-order law; Hybrid forward-order law. \\   

{\bf Mathematics Subject Classification:} 15A09, 16W10, 16B99. 
\end{keyword}

\end{frontmatter}

\newpage
\section{Introduction}
Inverses/generalized inverses of a product of two or three elements over a ring were
investigated by many researchers.  For instance, if $a$ and $b$ are a pair of invertible elements, then $ab$ is also invertible, and the inverse
of the product $ab$ satisfying 
 $$(ab)^{-1}=b^{-1}a^{-1},$$
 is known as the {\it reverse-order law}. On the other way, $$(ab)^{-1}=a^{-1}b^{-1}$$ is known as the {\it forward-order law}.
While the reverse-order law does not hold for different generalized inverses (see the next section for its definition),  the forward-order law is not valid even for invertible elements. One of the fundamental topics in the theory of generalized inverses is investigating various reverse-order laws and forward-order laws. For example, Mosic and Djordjevic \cite{Mosic7} expanded the reverse-order law for the Moore-Penrose inverse of a matrix to the reverse-order law for the Moore-Penrose inverse of an element over ring. In 2012, Mosic and Djordjevic \cite{Mosic} extended the reverse-order law for the  group inverse in Hilbert space to ring.   
 In 2017, Zhu and Chen \cite{Zhu25} bestowed the forward-order law for the Drazin inverse in a ring. Zhu (\cite{Chen20} and \cite{Chen22}) conferred several results on  additive properties,  reverse-order law and forward-order law. In 2012, Mosic and Djordjevic  \cite{Mosic17} provided the hybrid reverse-order law between the group inverse and the Moore-Penrose inverse. 
In 2018, Zhu {\it et al.} \cite{Chen14}  provided the reverse-order law for the generalized core inverse. In 2020, Sahoo {\it et al.} \cite{Sahoo} endowed the reverse-order law of the weighted core inverse. In 2021,  Gao {\it et al.} \cite{Gao3} provided the reverse-order law for the generalized pseudo core inverse.  In 2021, Li {\it et al.} \cite{li} studied the forward-order law for the  core inverse and the hybrid forward-order law among the  core inverse, the Moore-Penrose inverse and the group inverse in matrix setting.
 The vast literature on the core inverse and the weighted core inverse with its multifarious extensions in different areas of mathematics motivate us to study the following two problems.
 \begin{center}
 \begin{itemize}
     \item[(i)]   When does the forward-order law for the core inverse and the weighted core inverse hold?
     \item[(ii)] When does the triple forward-order law hold?
 \end{itemize}

\end{center}
 In application point of view, the core inverse is used to find the Bott-Duffin inverse \cite{butt}. Also,  one can compute the Moore-Penrose inverse and the group inverse by using the forward-order law for the core inverse. The theory of generalized inverses over rings is also used in cryptography \cite{dra}. For example: To find the solution of  $y=ax+b$(mod 26) which is $x=a^{-1}(y-b)$(mod 26) where  $a^{-1}$ is a generalized inverse of $a$
 \cite{bokr}. 
  The original idea of introducing the matrix partial orders comes from the partial orders that were defined in the
context of semigroups. It is thus interesting to study the above-stated problems in rings. \par
 
 The main goal of this article is, therefore, to study the forward-order law for the core inverse and the weighted core inverse. Furthermore, examples that stress their importance are presented, and counterexamples  are also illustrated. Before we begin, we organize this article as follows. Section \ref{sec2} presents a few conditions for the forward-order law for the core inverse. Some conditions for the  forward-order law for the weighted core inverse are also established. Finally, the triple forward-order law is discussed for the core inverse. Section \ref{sec3} provides some necessary and sufficient conditions of the hybrid forward-order law. 
\section{Preliminaries}
Throughout this article,  $R$ denotes a unital $*$-ring, that is, a ring with unity 1 and $*$ an involution. A ring $R$ is called as a {\it proper ring} if $a^*a=0\Rightarrow a=0$ for all $a\in R$.  An {\it involution $*$} is an  anti-isomorphism of order 2 that satisfies the conditions 
$$(a+b)^*=a^*+b^*,~~(ab)^*=b^*a^*, \text{ and } (a^*)^*=a, \text{ for all }a, b \in R.$$
An element $a$  is said to be {\it Hermitian} if $a^*=a$, and  is called  {\it idempotent}  if $a^2=a.$ The left annihilator of $a\in R$ is given by ${^{\circ}{(a)}}=\{x\in R:xa=0\} $ and  the right  annihilator of $a$ is given by $(a)^{\circ}=\{x\in R:ax=0\}$. An element $a\in R$ is {\it Moore-Penrose invertible} if there exists a unique element $x\in R$ that satisfies the equations:
$$(1.)~ axa=a, ~~(2.)~ xax=x, ~~(3.)~ (ax)^*=ax, \textnormal{ and } (4.)~ (xa)^*=xa.$$
Then, $x$ is called as the {\it Moore-Penrose inverse} \cite{Mpenrose}  of $a$, and is denoted as $x=a^{\dagger}$. By $R^{\dagger}$, we denote the set of all Moore-Penrose invertible elements of $R$. The set of all elements which satisfies any of the combinations of the above four equations is denoted as $a\{i,j,k,l\}$, where $i,j,k,l \in \{1,2,3,4\}$, and is called a generalized inverse of $a$. The first and third generalized inverse of $a$ is denoted as $a^{(1,3)}.$ The set of first and third invertible elements of $R$, is denoted by $R^{(1,3)}.$
An element $a $ is called  {\it Drazin invertible} \cite{Drazin}  if  there exists a unique element $x\in R$ such that
$xa^{k+1}=a^{k},ax=xa,\text{ and } ax^2=x$, for some positive integer $k$. If the Drazin inverse of $a$ exists, then it is denoted by $a^d$. The smallest positive integer $k$ is called the {\it Drazin index}, is denoted by $i(a)$. The set of all Drazin invertible elements of $R$ will be denoted by $R^{d}.$  If $i(a) = 1$, then the Drazin inverse of $a$ is called as the  {\it group inverse} of $a$, and is denoted by $a^{\#}$. The set of group invertible elements of $R$ will be denoted by $R^{\#}$.

 The perusal of the core inverse is one of the areas of generalized inverses. That has caught the interest of numerous researchers in the past few decades. Firstly, Baksalary and Trenkler \cite{baks} introduced the core inverse for complex matrices. Let $A \in \mathbb{C}^{n\times n}$. A matrix $\core{A} \in \mathbb{C}^{n\times n}$ is called the {\it core inverse} of $A$ if
$$ A\core{A} = P_A \text { and }  R(\core{A})\subseteq R(A),$$
where $P_A$ is the orthogonal projector onto $R(A)$, and $R(A)$ is the column space of $A$.
Motivated by this work, Rakic {\it et al.} \cite{rakic} introduced core inverse in rings. An element $\core{a} \in R$ satisfying 
     $$a\core{a}a =a, \core{a}R = aR, \text{ and } R\core{a} = Ra^*,$$
is called the {\it core inverse} of $a$.  The authors proved that an element $a \in R$ is core invertible if and only if there exists $x \in R$ which satisfies
$$ axa = a,  xax = x,  (ax)^* = ax,  xa^2 = a, \text{and } ax^2 = x.$$
The element $x$ is known as the core inverse of $a.$ And it is unique (if exists). In 2016, Xu {\it et al}. \cite{Xu} proved that  if  $a$ is satisfying these three conditions  
$$(ax)^*=ax,~~ax^2=x,~ \text{ and }~ xa^2=a,$$
then $a$ is core invertible. The set of all core invertible elements of $R$ will be denoted by $\core{R}.$ In 2017, Mosic {\it et al.} \cite{mosicm} extended the notion of the core inverse to the weighted core inverse in a ring with involution. Let $a \in R$, and $e \in R$ be an invertible element with $e^* = e$. Then, a unique element $x \in R$ is said to be $e$-{\it weighted core inverse} if
$$ax^2 = x, xa^2 = a, \text{ and } (eax)^* = eax.$$
The $e$-weighted core inverse of an element $a\in R$  is denoted by $a^{\dc{}}$ (if exists). The set of all $e$-weighted core invertible elements of R is denoted by $R^{\dc{}}.$ These results will often be used later in this article.
 
\begin{lemma}(Corollary 3.4, \cite{zhou1})\label{t2.1}\\ Let $a, x \in R$ with $xa = ax$ and $xa^* = a^*x$. If $a \in \core{R}$, then $x\core{a} = \core{a}x$.
\end{lemma}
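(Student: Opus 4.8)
The plan is to reduce the statement to two commutation facts and combine them through a factorisation of the core inverse. Put $q:=a\core{a}$. The crux will be that $\core{a}=a^{\#}q$, so that it is enough to show (i) $xa^{\#}=a^{\#}x$ and (ii) $xq=qx$; granting these, $x\core{a}=x(a^{\#}q)=(xa^{\#})q=(a^{\#}x)q=a^{\#}(xq)=a^{\#}(qx)=(a^{\#}q)x=\core{a}x$, which is the assertion.

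First I would collect the elementary identities implied by the equational characterisation $a\core{a}a=a,\ \core{a}a\core{a}=\core{a},\ (a\core{a})^{*}=a\core{a},\ \core{a}a^{2}=a,\ a\core{a}^{2}=\core{a}$ recalled above. It gives: $a\in R^{\#}$ with $a^{\#}=(\core{a})^{2}a$ (check the three group-inverse axioms directly); the element $q=a\core{a}$ is a Hermitian idempotent, since $q^{2}=a(\core{a}a\core{a})=a\core{a}=q$, $q^{*}=q$, and $qa=a$ — whence also $a^{*}q=(qa)^{*}=a^{*}$ and $q=q^{*}=(\core{a})^{*}a^{*}$; and $\core{a}=a^{\#}q$, because $(\core{a})^{2}a\cdot a\core{a}=(\core{a})^{2}a^{2}\core{a}=(\core{a}a)\core{a}=\core{a}$ (using $\core{a}a^{2}=a$ and then $\core{a}a\core{a}=\core{a}$).

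For (ii) both hypotheses enter, one per side. From $xa=ax$: $xq=x(a\core{a})=(xa)\core{a}=(ax)\core{a}=a(x\core{a})$, so $qxq=(qa)(x\core{a})=a(x\core{a})=xq$. From $xa^{*}=a^{*}x$ together with $q=(\core{a})^{*}a^{*}$ and $a^{*}q=a^{*}$: $qx=(\core{a})^{*}(a^{*}x)=(\core{a})^{*}(xa^{*})=((\core{a})^{*}x)a^{*}$, so $qxq=((\core{a})^{*}x)(a^{*}q)=((\core{a})^{*}x)a^{*}=qx$. Comparing the two computations, $xq=qxq=qx$.

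Fact (i) is the classical statement that the group inverse lies in the double commutant of $a$; a short direct proof puts $p:=aa^{\#}=a^{\#}a$, uses the standard group-inverse identities and $xa=ax$ to get $pxp=px$ and $pxp=xp$ — hence $px=xp$ — and then $a^{\#}x=a^{\#}px=a^{\#}xp=a^{\#}(xa)a^{\#}=a^{\#}(ax)a^{\#}=pxa^{\#}=xpa^{\#}=xa^{\#}$. Combining (i), (ii) and $\core{a}=a^{\#}q$ as in the first paragraph finishes the proof. I do not anticipate a real obstacle: every step is a one- or two-line manipulation. The point that needs attention is that the hypothesis $xa^{*}=a^{*}x$ is genuinely used, and used exactly once, through $q^{*}=q$ and the resulting identity $qx=((\core{a})^{*}x)a^{*}$ — the hypothesis $xa=ax$ alone already yields $xa^{\#}=a^{\#}x$ and the one-sided relation $qxq=xq$, but not $qxq=qx$; one should also take care to note at the outset that the mere existence of $\core{a}$ forces $a\in R^{\#}$ (via $a^{\#}=(\core{a})^{2}a$), since the whole reduction relies on $\core{a}=a^{\#}q$.
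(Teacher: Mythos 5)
Your proposal is correct. Note first that the paper does not prove this lemma at all: it is imported verbatim as Corollary~3.4 of the cited reference \cite{zhou1}, so there is no internal proof to compare against; what you have produced is a self-contained derivation that makes the statement independent of that external source. Every step checks out: $a^{\#}=(\core{a})^{2}a$ does satisfy the three group-inverse equations (both $a\,a^{\#}$ and $a^{\#}a$ equal $\core{a}a$); $q=a\core{a}$ is a Hermitian idempotent with $qa=a$, $a^{*}q=a^{*}$ and $q=(\core{a})^{*}a^{*}$; the factorization $\core{a}=a^{\#}q$ follows from $\core{a}a^{2}=a$ and $\core{a}a\core{a}=\core{a}$; the two one-sided identities $qxq=xq$ (using only $xa=ax$) and $qxq=qx$ (using only $xa^{*}=a^{*}x$) combine to give $xq=qx$; and the double-commutant property $xa=ax\Rightarrow xa^{\#}=a^{\#}x$ is proved correctly via $p=aa^{\#}$. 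Your closing observation about where each hypothesis is used is also accurate: $xa=ax$ alone only yields $qxq=xq$, so the $*$-commutation hypothesis is genuinely needed to symmetrize. Conceptually your argument is the expected one -- writing the core inverse as the group inverse multiplied by the $\{1,3\}$-type projection $a\core{a}$ and commuting $x$ past each factor -- which is in the same spirit as the representation $\core{a}=a^{\#}aa^{(1,3)}$ underlying the cited result, so no further reconciliation with the literature is needed.
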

\begin{theorem}\label{thm1.5}  (Theorem 3.1, \cite{Chen14})\\
 Let $a, x \in R$. Then, the following are equivalent:\\
(i) $a \in \core{R}$, and $x = \core{a}$;\\
(ii) $axa = a, xR = aR$, and $Rx \subseteq Ra^*$;\\
(iii) $axa = a,$ ${^{\circ}x}$ = ${^{\circ}{a}}$, and $(a^*)^{\circ} \subseteq x^{\circ}$;\\
(iv) $xax = x, xR = aR$, and $Rx = Ra^*$;\\
(v) $xax = x, xR = aR$, and $Ra^* \subseteq Rx$;\\
(vi) $xax = x,$ ${^{\circ}{x}}$= ${^{\circ}a}$,  and $x^{\circ} \subseteq (a^*)^{\circ}$;\\
(vii) $a \in R^{\#}, axa = a, (ax)^* = ax$, and $xR \subseteq aR$;\\
(viii) $a \in R^{\#}, xax = x, (ax)^* = ax$, and $aR \subseteq xR$.
\end{theorem}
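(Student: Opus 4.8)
We take statement (i) as a hub and prove the equivalences in three strands, (i)$\Leftrightarrow$(ii)$\Leftrightarrow$(iii), (i)$\Leftrightarrow$(iv)$\Leftrightarrow$(v)$\Leftrightarrow$(vi), and (i)$\Leftrightarrow$(vii)$\Leftrightarrow$(viii). The implications departing from (i) are immediate. If $x=\core{a}$, then the five-equation description of the core inverse gives $axa=a$, $xax=x$, $(ax)^*=ax$, $xa^2=a$, $ax^2=x$ at once; the relation $\core{a}R=aR$ gives $xR=aR$, hence also ${^{\circ}x}={^{\circ}a}$ and $xR\subseteq aR$; the relation $R\core{a}=Ra^*$ gives $Rx=Ra^*$, hence $Rx\subseteq Ra^*$, $Ra^*\subseteq Rx$, $x^{\circ}=(a^*)^{\circ}$, and both one-sided inclusions of these annihilators; and $a\in R^{\#}$, since $(\core{a})^2a$ is routinely checked to be a group inverse of $a$. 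Thus (i) forces every one of (ii)--(viii), and all the content lies in the reverse implications.

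For the converses I would lean on two elementary observations. First, a one-sided ideal inclusion is just a factorization: $bR\subseteq cR\Leftrightarrow b\in cR$ and $Rb\subseteq Rc\Leftrightarrow b\in Rc$. Hence the hypotheses of (ii), (iv), (v) furnish elements with $a=xc$, $x=ad$ and $x=pa^*$; feeding these into the equation at hand ($axa=a$ in (ii), $xax=x$ in (iv), (v)) and using that $ax$ and $xa$ become idempotent, one recovers the missing equations among $ax^2=x$, $xa^2=a$, $(ax)^*=ax$, after which uniqueness of $\core{a}$ forces $x=\core{a}$. Second, for the annihilator versions (iii) and (vi) the engine is the idempotent trick: since $(1-ax)a=a-axa=0$, the element $1-ax$ lies in ${^{\circ}a}={^{\circ}x}$, so $(1-ax)x=0$, i.e.\ $ax^2=x$; and $a^*\bigl(1-(ax)^*\bigr)=(a-axa)^*=0$, so $1-(ax)^*\in(a^*)^{\circ}\subseteq x^{\circ}$, whence $x=x(ax)^*=xx^*a^*$ and therefore $ax=(ax)(ax)^*$, so that the idempotent $ax$ equals itself times its adjoint and is thus Hermitian, $(ax)^*=ax$. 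Together with the companion equation $xa^2=a$, the criterion of Xu et al.\ recalled above identifies $x$ with $\core{a}$. Finally, (vii) and (viii) come with $a\in R^{\#}$ already, so I would argue with $a^{\#}$: in (vii), $xR\subseteq aR$ gives $x=ay$, hence $x=a^{\#}a^2y=a^{\#}(ax)$, and from $axa=a$ and $a^{\#}=a(a^{\#})^2$ one gets $axa^{\#}=a^{\#}$, so $ax^2=(axa^{\#})(ax)=a^{\#}(ax)=x$ while $xa^2=a^{\#}(axa)a=a$; with $(ax)^*=ax$ in hand, Xu's criterion closes it. Item (viii) is dual: $xa^2=a$ falls out of $xax=x$ and $aR\subseteq xR$, then $xa=aa^{\#}$, hence $xR\subseteq aR$, and the same endgame applies.

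The crux is the reverse direction, and within it the self-adjoint relation $(ax)^*=ax$ --- equivalently the range condition $R\core{a}=Ra^*$. Every hypothesis in (ii)--(viii) is one-sided, so manufacturing an involution-compatible conclusion is the delicate step: for (iii) and (vi) the factorization $x=xx^*a^*$ does it cleanly, but for the pure ideal items one must exploit $x=pa^*$ and argue that the idempotent $ax=apa^*$ (or its conjugate) is forced to be symmetric by the regularity relation, which only works after $axa=a$ or $xax=x$ has been deployed. I expect (ii)$\Rightarrow$(i) and (iii)$\Rightarrow$(i) (and the mirror pair for $x$) to be the stubbornest cases: almost no $*$-data is supplied up front and even $a\in R^{\#}$ must be recovered --- one half, $a=axa=a(ax^2)a=a^2x^2a\in a^2R$, is easy, while the left-handed counterpart $a\in Ra^2$ must be squeezed out of the annihilator hypotheses --- before the reduction to (vii)/(viii) is available. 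Careful ordering of the steps, and choosing at each node whether to target the definition of $\core{a}$, the five equations, or the three equations of Xu et al., is the bookkeeping that makes the whole web of equivalences close up.
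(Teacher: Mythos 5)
First, a caveat on the comparison itself: the paper does not prove this theorem at all --- it is quoted as Theorem 3.1 of Zhu--Chen--Patr\'icio \cite{Chen14} and used purely as a preliminary --- so your proposal has to be judged on its own. Much of it is sound: the implications out of (i), the identification $a^{\#}=(\core{a})^2a$, your arguments for (vii)$\Rightarrow$(i) and (viii)$\Rightarrow$(i) (including $xa^2=xa(xt)=a$ and $xa=aa^{\#}$), and the two annihilator computations in (iii) giving $ax^2=x$ and $(ax)^*=ax$ are all correct, and the latter dualize properly to (vi).

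The genuine gaps are exactly where you flag the difficulty and then stop. In (iii) you invoke ``the companion equation $xa^2=a$'' without deriving it; it does follow, but only by a further round of the same trick: once $(ax)^*=ax$, one has $a^*ax=a^*(ax)^*=(axa)^*=a^*$, so $1-ax\in(a^*)^{\circ}\subseteq x^{\circ}$, giving $xax=x$, and then $(1-xa)x=0$ together with ${}^{\circ}x={}^{\circ}a$ gives $(1-xa)a=0$, i.e.\ $xa^2=a$. For (ii), (iv), (v) the crucial step --- Hermitian-ness of the idempotent $e=ax$ --- is only gestured at (``forced to be symmetric by the regularity relation'') and never proved; note also that in (v) the hypothesis furnishes $a^*=qx$, not $x=pa^*$ as you assert. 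The missing argument is, e.g.\ for (ii): from $a=axa$ one gets $a^*=a^*(ax)^*$, hence $Re\subseteq Rx\subseteq Ra^*\subseteq Re^*$, so $e=ve^*$ for some $v$, and then $e=ve^*e^*=ee^*$ is Hermitian; after that, $x\in Ra^*$ with $a^*ax=a^*$ yields $xax=x$, and $a\in xR$ yields $xa^2=a$, so Xu's three-equation criterion closes the case (with the conjugate version, via $e^*\in Re$, handling (iv) and (v)). Finally, your worry that $a\in Ra^2$ must be extracted before reducing (ii)/(iii) to (vii)/(viii) is an unnecessary detour: once $ax^2=x$, $xa^2=a$ and $(ax)^*=ax$ are in hand, core (hence group) invertibility is automatic, so no separate verification of $a\in R^{\#}$ is required. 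As written, the proposal is an honest but incomplete sketch of the hardest implications.
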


\section{Forward-order law for the core inverse}\label{sec2}
This section provides some sufficient conditions under which the forward-order law holds for the core inverse. First, we prove some results for the core inverse. We  discuss the forward-order law for the core inverse and the weighted core inverse. We then present some characterizations of the forward-order law for the core inverse. This section begins with the following lemma. 

\begin{lemma}\label{2.1}
Let $a\in \core{R}$. Then, the following conditions hold:
\begin{enumerate}[(i)]
    \item $(a^*)^{\circ}= (\core{a})^{	\circ};$
    \item  $^{\circ}[(\core{a})^*]= {^{\circ}(a)}= {^{\circ}(\core{a})};$ 
    \item  $\core{a}b=\core{a}c$ if and only if $a^*b=a^*c,$ where $b,c\in R;$
    \item  $b\core{a}=c\core{a}$ if and only if $ba=ca,$ where $b,c\in R;$
    \item  ${^{\circ}(b(\core{a})^*)}={^{\circ}(ba)}$, where $b\in R.$
\end{enumerate}

\end{lemma}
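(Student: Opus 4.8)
The plan is to derive the whole lemma from Theorem~\ref{thm1.5}, the defining relations $a\core{a}a=a$, $\core{a}R=aR$, $R\core{a}=Ra^{*}$ of the core inverse, and two elementary facts: (a) if two elements generate the same right (resp.\ left) ideal, then they have the same left (resp.\ right) annihilator, since membership in ${}^{\circ}(\cdot)$ or in $(\cdot)^{\circ}$ depends only on the corresponding one-sided ideal; and (b) the involution interchanges left and right ideals, namely $(Ry)^{*}=y^{*}R$ and $(yR)^{*}=Ry^{*}$. Parts (iii)--(v) will then be pure reformulations of parts (i)--(ii), so the real content lies in (i) and (ii).

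For (i): from $R\core{a}=Ra^{*}$ (equivalently, Theorem~\ref{thm1.5}(iv)) the elements $\core{a}$ and $a^{*}$ generate the same left ideal, so by (a) they have the same right annihilator, that is $(a^{*})^{\circ}=(\core{a})^{\circ}$; alternatively this is the conjunction of the inclusions $(a^{*})^{\circ}\subseteq(\core{a})^{\circ}$ and $(\core{a})^{\circ}\subseteq(a^{*})^{\circ}$ furnished by Theorem~\ref{thm1.5}(iii) and (vi). For (ii): the equality ${}^{\circ}(\core{a})={}^{\circ}a$ follows from $\core{a}R=aR$ via (a), or directly from Theorem~\ref{thm1.5}(iii). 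For the remaining equality I would apply $*$ to $R\core{a}=Ra^{*}$; by (b) the left-hand side becomes $(\core{a})^{*}R$ and the right-hand side becomes $aR$, so $(\core{a})^{*}$ and $a$ generate the same right ideal, and (a) yields ${}^{\circ}[(\core{a})^{*}]={}^{\circ}a$.

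Given (i) and (ii), part (iii) is the chain $\core{a}b=\core{a}c\iff b-c\in(\core{a})^{\circ}\iff b-c\in(a^{*})^{\circ}\iff a^{*}b=a^{*}c$, where the middle equivalence is (i); part (iv) is the analogous chain with left annihilators, using ${}^{\circ}(\core{a})={}^{\circ}a$; and part (v) follows since, for every $z\in R$, one has $z\in{}^{\circ}(b(\core{a})^{*})\iff zb\in{}^{\circ}[(\core{a})^{*}]$ and $z\in{}^{\circ}(ba)\iff zb\in{}^{\circ}a$, and these two conditions on $zb$ agree by (ii). I do not anticipate a genuine obstacle; the only point requiring care is the bookkeeping with the involution in (ii)---one must remember that transposing the \emph{left}-ideal identity $R\core{a}=Ra^{*}$ produces a statement about the \emph{right} ideal generated by $(\core{a})^{*}$, hence about its left annihilator, which is precisely what part (ii) asserts.
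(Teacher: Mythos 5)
Your proof is correct, but it takes a genuinely different route from the paper. The paper argues computationally: each inclusion is verified by explicit pre- and post-multiplications using the identities $a(\core{a})^{2}=\core{a}$, $\core{a}a^{2}=a$, $(a\core{a})^{*}=a\core{a}$, $a\core{a}a=a$ and $(\core{a})^{*}a^{*}(\core{a})^{*}=(\core{a})^{*}$, and parts (iii)--(v) are proved by the same kind of element chasing (e.g.\ the converse of (iii) multiplies by $\core{a}(\core{a})^{*}$, the converse of (iv) by $(\core{a})^{2}$). You instead exploit the ideal-theoretic definition $\core{a}R=aR$, $R\core{a}=Ra^{*}$ together with two general facts: in a unital ring the left (resp.\ right) annihilator of an element depends only on the right (resp.\ left) ideal it generates, and the involution carries $Ry$ to $y^{*}R$; this makes (i) and (ii) immediate, and (iii)--(v) become formal corollaries of (i)--(ii) via the observation that $\core{a}b=\core{a}c$ means $b-c\in(\core{a})^{\circ}$, etc. Your argument is shorter, more conceptual, and unifies the five parts, at the mild cost of invoking unitality explicitly (which the paper assumes anyway) and of not producing the explicit multipliers (such as $\core{a}(\core{a})^{*}$ and $(\core{a})^{2}$ in (iii)--(iv)) that the computational proof exhibits and that are in the spirit of the manipulations reused later in the paper. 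All steps check out: $(\core{a})^{\circ}=(a^{*})^{\circ}$ from $R\core{a}=Ra^{*}$, ${}^{\circ}(\core{a})={}^{\circ}(a)$ from $\core{a}R=aR$, and ${}^{\circ}[(\core{a})^{*}]={}^{\circ}(a)$ from applying $*$ to $R\core{a}=Ra^{*}$, exactly as you describe.
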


\begin{proof}
(i) We know that $$(a\core{a})^*=a\core{a},a(\core{a})^2=\core{a},\text{ and }  \core{a}a^2=a.$$
Let $x\in (a^*)^{\circ}.$ So, $a^*x=0$. If we pre-multiply $(\core{a})^*$ in $a^*x=0$, then we obtain $(\core{a})^*a^*x=0$, which implies
$a\core{a}x=0$. Pre-multiplying $\core{a}$ in equation $a\core{a}x=0$, we get $\core{a}a\core{a}x=0$, i.e., $\core{a}x=0$, i.e., $x\in (\core{a})^{\circ}.$
Therefore, 
\begin{equation}\label{equa2.1}
    (a^*)^{\circ}\subseteq(\core{a})^{\circ}.
\end{equation}
Conversely, if  $x\in (\core{a})^{\circ}$, then $\core{a}x=0$. Pre-multiplying $a$ in last equation $\core{a}x=0$, we have $ a\core{a}x=0$, i.e., $(a\core{a})^*x=0$. Again, pre-multiplying $a^*$ in $(a\core{a})^*x=0$, we get
    $a^*(\core{a})^*a^*x=0$, i.e., $a^*x=0$, i.e., $x\in(a^*)^{\circ}.$ Thus,
  \begin{equation}\label{equa2.2}
      (\core{a})^{\circ}\subseteq (a^*)^{\circ}.
  \end{equation} 
From \eqref{equa2.1} and \eqref{equa2.2}, we reach $(a^*)^{\circ}= (\core{a})^{\circ}$.\\
(ii) Let $x\in {^{\circ}[(\core{a})^*]}$. So, $x(\core{a})^*=0$. We know that $(a\core{a})^*=a\core{a}$. Post-multiplying  $a^*a$ in equation $x(\core{a})^*=0$, we obtain $x(a\core{a})^*a=0$, i.e., $xa\core{a}a=0$, i.e., $xa=0.$ Hence,  
\begin{equation}\label{equa2.3}
    {^{\circ}[(\core{a})^*]}\subseteq {^{\circ}(a)}.
\end{equation} Conversely, if $x\in {^{\circ}(a)},$ then $xa=0$. Post-multiplying  $\core{a}$ in $xa=0$, we have $xa\core{a}=0$, i.e., $x(\core{a})^*a^*=0$. Again, post-multiplying  $(\core{a})^*$, we get $x(\core{a})^*a^*(\core{a})^*=0$, i.e., $x(\core{a})^*=0.$ Thus,
\begin{equation}\label{equa2.4}
     ^{\circ}(a)\subseteq {^{\circ}[(\core{a})^*]}.
\end{equation} From \eqref{equa2.3} and \eqref{equa2.4}, we find  $^{\circ}[(\core{a})^*]= {^{\circ}(a)}$. Now, let $x\in {^{\circ}(a)}$, so $xa=0$. Post-multiplying  $(\core{a})^2$, we obtain
$xa({\core{a})^2}=0$, i.e., $x\core{a}=0.$ Therefore,
\begin{equation}\label{equa2.5}
    {^{\circ}(a)}\subseteq {^{\circ}(\core{a})}.
\end{equation} Conversely, we assume $x\in {^{\circ}(\core{a})}$. Then, post-multiplying $a^2$ in $x\core{a}=0$,  we have $x\core{a}a^2=0$, i.e., $xa=0$, i.e., $x\in {^{\circ}(a)}.$ Thus,
\begin{align}\label{equa2.6}
    {^{\circ}(\core{a})}\subseteq {^{\circ}(a)}.
\end{align} 
By \eqref{equa2.5} and \eqref{equa2.6}, we obtain  ${^{\circ}(\core{a})}= {^{\circ}(a)}$ implies $^{\circ}[(\core{a})^*]= {^{\circ}(a)}={^{\circ}(\core{a})}$.\\
(iii) We know that $(a\core{a})^*=a\core{a}$. We have $\core{a}b=\core{a}c$. Pre-multiplying $a^*a$, we get $a^*(\core{a})^*a^*b=a^*(\core{a})^*a^*c$, i.e.,
$a^*b=a^*c.$ Conversely, if  we pre-multiply  $\core{a}(\core{a})^*$ in $a^*b=a^*c$, then  we get $\core{a}b=\core{a}c.$\\
(iv) We have  $b\core{a}=c\core{a}$. Post-multiplying  $a^2$ in $b\core{a}=c\core{a}$, we obtain
$b\core{a}a^2=c\core{a}a^2$, i.e.,
   $ba=ca.$ Conversely, if we post-multiply by $(\core{a})^2$ in $ba=ca$, then we get $b\core{a}=c\core{a}.$\\
(v) Let $x\in {^{\circ}(b(\core{a})^*)}$. Then, we have $xb(\core{a})^*=0$. Post-multiplying  $a^*$ both sides of equation $xb(\core{a})^*=0$, we get $xb(\core{a})^*a^*=0$. So, $xb(a\core{a})^*=0$, i.e., $xba\core{a}=0$. Post-multiplying  $a$ both sides in $xba\core{a}=0$, and from $a\core{a}a=a$, we get $xba=0$. So, $x\in (ba)^{\circ}$, which implies ${^{\circ}(b(\core{a})^*)}\subseteq{^{\circ}(ba)}$. Conversely,  $x\in {^{\circ}}(ba)$ yields $xba=0$. Post-multiplying  $\core{a}$ both sides in $xba=0$, and from $(a\core{a})^*=(\core{a})^*a^*$, we get $xb(\core{a})^*a^*=0$. Again, post-multiplying  $(\core{a})^*$ both sides of equation $xb(\core{a})^*a^*=0$, and using the identity $(\core{a})^*a^*(\core{a})^*=(\core{a})^*$, we get $xb(\core{a})^*=0$. So, $x\in {^{\circ}(b(\core{a})^*)}$, which implies that ${^{\circ}(ba)}\subseteq{^{\circ}(b(\core{a})^*)}.$ Hence, ${^{\circ}(b(\core{a})^*)}={^{\circ}(ba)}$.
\end{proof}

Now, we present the forward-order law for the core inverse under the assumption of a few conditions.

\begin{theorem}\label{thm2.5}
Let $a,b\in \core{R}$ with $aba=ba^2=a^2b$. If $ab\core{a}=a\core{a}b$ and $ab\core{b}=b\core{b}a$, then $\core{(ab)}=\core{a}\core{b}.$
\end{theorem}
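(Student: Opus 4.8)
The plan is to verify directly that $x:=\core{a}\core{b}$ is the core inverse of $c:=ab$ by checking the three defining identities used by Xu {\it et al.}~\cite{Xu} and recalled in the Preliminaries, namely $(cx)^{*}=cx$, $cx^{2}=x$ and $xc^{2}=c$. Written out with $x=\core{a}\core{b}$ and $c=ab$, these are: \textnormal{(I)} $(ab\core{a}\core{b})^{*}=ab\core{a}\core{b}$; \textnormal{(II)} $ab\core{a}\core{b}\core{a}\core{b}=\core{a}\core{b}$; and \textnormal{(III)} $\core{a}\core{b}abab=ab$. Once \textnormal{(I)}--\textnormal{(III)} hold, the criterion of Xu {\it et al.} (together with uniqueness of the core inverse) gives $ab\in\core{R}$ and $\core{(ab)}=\core{a}\core{b}$, which is the assertion.

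Before attacking \textnormal{(I)}--\textnormal{(III)} I would harvest a few consequences of the hypotheses. From $aba=a^{2}b$, right multiplication by $b$ gives $abab=a^{2}b^{2}$; since $ba^{2}=a^{2}b$ says that $a^{2}$ commutes with $b$, it also commutes with $b^{2}$, so $abab=a^{2}b^{2}=b^{2}a^{2}$. Left-multiplying $aba=a^{2}b$ by $\core{a}$ and using $\core{a}a^{2}=a$ yields $\core{a}aba=ab$, and inserting $aba=ba^{2}$ into this gives $\core{a}ba^{2}=ab$. The crucial observation is that the hypothesis $ab\core{b}=b\core{b}a$ says precisely that $a$ commutes with the Hermitian idempotent $b\core{b}$; taking adjoints and using $(b\core{b})^{*}=b\core{b}$ shows that $a^{*}$ commutes with $b\core{b}$ as well, so Lemma~\ref{t2.1} (with its ``$a$'' taken to be our $a$ and its ``$x$'' taken to be $b\core{b}$) delivers $\core{a}(b\core{b})=(b\core{b})\core{a}$. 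The remaining hypothesis $ab\core{a}=a\core{a}b$ will be used verbatim.

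With these facts in hand, each of \textnormal{(I)}--\textnormal{(III)} is a short computation. For \textnormal{(III)}: $\core{a}\core{b}abab=\core{a}\core{b}a^{2}b^{2}=\core{a}\core{b}b^{2}a^{2}=\core{a}(\core{b}b^{2})a^{2}=\core{a}ba^{2}=ab$, using $\core{b}b^{2}=b$ and then $\core{a}ba^{2}=ab$. For \textnormal{(II)}: $ab\core{a}\core{b}\core{a}\core{b}=(a\core{a}b)\core{b}\core{a}\core{b}=a\core{a}(b\core{b})\core{a}\core{b}=a\core{a}\core{a}(b\core{b})\core{b}=\core{a}(b\core{b})\core{b}=\core{a}b(\core{b})^{2}=\core{a}\core{b}$, using $ab\core{a}=a\core{a}b$, the commutation $\core{a}(b\core{b})=(b\core{b})\core{a}$, then $a(\core{a})^{2}=\core{a}$ and $b(\core{b})^{2}=\core{b}$. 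For \textnormal{(I)}: $ab\core{a}\core{b}=a\core{a}b\core{b}$ by $ab\core{a}=a\core{a}b$; pushing $\core{a}$ and then $a$ past $b\core{b}$ gives $a\core{a}(b\core{b})=a(b\core{b})\core{a}=(b\core{b})a\core{a}$, whence $ab\core{a}\core{b}=b\core{b}\,a\core{a}$, and since $a\core{a}$ and $b\core{b}$ are Hermitian, $(ab\core{a}\core{b})^{*}=(a\core{a}b\core{b})^{*}=(b\core{b})^{*}(a\core{a})^{*}=b\core{b}\,a\core{a}=ab\core{a}\core{b}$.

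The main (really, the only) hurdle is spotting the role of Lemma~\ref{t2.1}: one must recognize that $ab\core{b}=b\core{b}a$ is exactly the commutativity of $a$ with the Hermitian idempotent $b\core{b}$, which is what makes Lemma~\ref{t2.1} applicable and yields $\core{a}(b\core{b})=(b\core{b})\core{a}$. After that, \textnormal{(I)}--\textnormal{(III)} reduce to routine substitutions with the core-inverse identities $a\core{a}a=a$, $\core{a}a^{2}=a$, $a(\core{a})^{2}=\core{a}$, $(a\core{a})^{*}=a\core{a}$ (and their $b$-counterparts), together with $aba=ba^{2}=a^{2}b$ and $ab\core{a}=a\core{a}b$; I would expect no further obstruction.
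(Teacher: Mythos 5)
Your proposal is correct and follows essentially the same route as the paper: it applies Lemma~\ref{t2.1} to the Hermitian idempotent $b\core{b}$ (after taking involution of $ab\core{b}=b\core{b}a$) to get $\core{a}b\core{b}=b\core{b}\core{a}$, and then verifies the three Xu-type conditions $(ab\cdot\core{a}\core{b})^*=ab\,\core{a}\core{b}$, $ab(\core{a}\core{b})^2=\core{a}\core{b}$, $\core{a}\core{b}(ab)^2=ab$ exactly as the paper does. The only differences are cosmetic reorderings in the intermediate computations (e.g.\ $abab=a^2b^2=b^2a^2$ versus the paper's $abab=ba^2b=b^2a^2$).
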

\begin{proof} Taking involution of  $ab\core{b}=b\core{b}a$, we have $a^*b\core{b}=b\core{b}a^*.$  By Lemma \ref{t2.1}, we obtain $\core{a}b\core{b}=b\core{b}\core{a}$. Now, we will prove that $\core{(ab)}=\core{a}\core{b}$  by using the definition of the core inverse.
\begin{align*}
    ab\core{a}\core{b}\core{a}\core{b}&=a\core{a}b\core{b}\core{a}\core{b}\\
    &=a\core{a}\core{a}b\core{b}\core{b}\\
    &=\core{a}\core{b},
\end{align*}
\begin{align*}
    \core{a}\core{b}abab&=\core{a}\core{b}ba^2b\\
    &=\core{a}\core{b}b^2a^2\\
    &=\core{a}ba^{2}\\
    &=\core{a}a^2b\\
    &=ab,
\end{align*}
and
\begin{align*}
    (ab\core{a}\core{b})^*&=(a\core{a}b\core{b})^*\\
    &=(ab\core{b}\core{a})^*\\
    &=(b\core{b}a\core{a})^*\\
    &=(a\core{a})^*(b\core{b})^*\\
    &=a\core{a}b\core{b}\\
    &=ab\core{a}\core{b}.
\end{align*}
Hence, $\core{(ab)}=\core{a}\core{b}.$
\end{proof}
 Note that the condition  $aba=ba^2=a^2b$  in the above result can be replaced by $ab=ba$.
The next result provides another set of sufficient conditions for the forward-order law. 

\begin{theorem}\label{th5.2}
Let $a, b\in\core{R}$. If $a^*b=a^*a\core{(ab)}b^2$ and $\core{b}ba=ab\core{b}$, then $\core{(ab)}=\core{a}\core{b}$.
\end{theorem}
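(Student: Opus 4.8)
The plan is to put $\core a\core b$ into a convenient normal form by means of the first hypothesis, and then to strip off the trailing factor of that normal form by means of the second. Observe first that the very statement $a^*b=a^*a\core{(ab)}b^2$ presupposes $ab\in\core R$; set $y:=\core{(ab)}$, so in particular $(ab)y^{2}=y$ and $Ry=R(ab)^{*}$.

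First I would apply Lemma \ref{2.1}(iii) with $c:=a\core{(ab)}b^{2}$: the hypothesis is precisely $a^{*}b=a^{*}c$, hence $\core a b=\core a c=\core a a\core{(ab)}b^{2}$. Multiplying this on the right by $(\core b)^{2}$ turns the left-hand side into $\core a\core b$, because $b(\core b)^{2}=\core b$, and the right-hand side into $\core a a\core{(ab)}\,b\core b$, because $b^{2}(\core b)^{2}=b\bigl(b(\core b)^{2}\bigr)=b\core b$. A short computation using $\core{(ab)}=(ab)(\core{(ab)})^{2}$ and $\core a a^{2}=a$ gives $\core a a\core{(ab)}=ab(\core{(ab)})^{2}=\core{(ab)}$, so I reach the normal form
$$\core a\core b=\core{(ab)}\,b\core b .$$

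Next I would bring in $\core b ba=ab\core b$. Since $b\core b b=b$, we have $ab=a\bigl(b\core b b\bigr)=(ab\core b)b=(\core b ba)b=\core b(bab)$, so $ab\in\core b R=bR$; hence $(ab)^{*}\in Rb^{*}$ and therefore $y\in Ry=R(ab)^{*}\subseteq Rb^{*}$, say $y=rb^{*}$. On the other hand $b^{*}b\core b=b^{*}$, which is immediate from $(b\core b)^{*}=b\core b$ and $b\core b b=b$. Consequently $y\,b\core b=r\,b^{*}b\core b=rb^{*}=y$, and substituting this into the normal form gives $\core a\core b=\core{(ab)}b\core b=\core{(ab)}$, as required.

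I expect the only step needing real care to be the derivation of the normal form $\core a\core b=\core{(ab)}b\core b$: one must multiply by $(\core b)^{2}$ rather than by $\core b$ in order for the reductions $b^{2}(\core b)^{2}=b\core b$ and $\core a a\core{(ab)}=\core{(ab)}$ to be available. After that the second hypothesis plays a transparent role: it forces $ab\in bR$, and this range condition is exactly what makes the trailing Hermitian idempotent $b\core b$ act as a right identity on $\core{(ab)}$.
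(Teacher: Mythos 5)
Your proof is correct and follows essentially the same route as the paper: both convert the first hypothesis via Lemma \ref{2.1}(iii) into $\core{a}b=\core{a}a\core{(ab)}b^{2}$, absorb $\core{a}a\core{(ab)}$ into $\core{(ab)}$ using $\core{a}a^{2}=a$ and $ab(\core{(ab)})^{2}=\core{(ab)}$, and use the second hypothesis to make $b\core{b}$ act as a right identity on $\core{(ab)}$. The only cosmetic difference is that you obtain $\core{(ab)}b\core{b}=\core{(ab)}$ from $ab\in bR$ via $R\core{(ab)}=R(ab)^{*}\subseteq Rb^{*}$ and $b^{*}b\core{b}=b^{*}$, whereas the paper reaches the same identity by deriving $(ab)^{*}=(ab)^{*}b\core{b}$ and applying Lemma \ref{2.1}(iii) once more.
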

\begin{proof}

From Lemma \ref{2.1} (iii), $a^*b=a^*a\core{(ab)}b^2$ yields  $\core{a}b=\core{a}a\core{(ab)}b^2$. From $ab=ab\core{b}b$, and  $\core{b}ba=ab\core{b}$, we get $ab=b\core{b}ab$ which implies that $(ab)^*=(ab)^*b\core{b}.$ From Lemma \ref{2.1} (iii), $\core{(ab)}=\core{(ab)}b\core{b}.$ Now, 
\begin{align*}
    \core{a}\core{b}=\core{a}a\core{a}b(\core{b})^2&=\core{a}(a\core{a})^*b(\core{b})^2=\core{a}(\core{a})^*a^*b(\core{b})^2\\
    &=\core{a}(\core{a})^*a^*a\core{(ab)}b^2(\core{b})^2=\core{a}a\core{(ab)}b\core{b}=\core{a}a\core{(ab)}\\
    &=\core{a}aab\core{(ab)}\core{(ab)}=ab\core{(ab)}\core{(ab)}=\core{(ab)}.
\end{align*}
\end{proof}

A characterization of the forward-order law for a class of elements  satisfing the condition  $b\core{b}a=ab\core{b}$ is established below.
\begin{theorem}\label{th2.6}
Let $a,b\in\core{R}$ with $b\core{b}a=ab\core{b}$. Then, the following are equivalent:
\begin{enumerate}[(i)]
    \item $ab\in\core{R}$ and $\core{(ab)}=\core{a}\core{b}$;
    \item $ab\in R^{\#}$, $a\core{b}R\subseteq \core{a}\core{b}R$, and  $a^*\core{b}(1-(ab\core{a}\core{b})^*)b=0$.
\end{enumerate}
\end{theorem}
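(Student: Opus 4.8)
The plan is to prove the two implications separately, using the characterizations of the core inverse collected in Theorem \ref{thm1.5} together with the annihilator identities of Lemma \ref{2.1}. Throughout I will write $c = ab$ and exploit the hypothesis $b\core{b}a = ab\core{b}$, which (as in the proof of Theorem \ref{th5.2}) gives $ab = b\core{b}ab$, hence $(ab)^* = (ab)^* b\core{b}$, and therefore by Lemma \ref{2.1}(iii) that $\core{(ab)} = \core{(ab)} b\core{b}$ whenever $ab \in \core{R}$; this last identity will be the workhorse for rewriting products. I also record the elementary consequences $a\core{b} = a\core{b}(b\core{b}) \cdot$(nothing new) and, more usefully, that $\core{a}\core{b}\,ab = \core{a}\core{b}$ fails in general but $ab\core{a}\core{b}$ is the relevant idempotent to track.

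For (i) $\Rightarrow$ (ii): assume $ab \in \core{R}$ and $\core{(ab)} = \core{a}\core{b}$. Since $\core{R} \subseteq R^{\#}$ (every core invertible element is group invertible — this is implicit in Theorem \ref{thm1.5}(vii)), we immediately get $ab \in R^{\#}$. For the range inclusion $a\core{b}R \subseteq \core{a}\core{b}R$: from $\core{(ab)} = \core{a}\core{b}$ and the defining relation $\core{(ab)}R = abR$ (Theorem \ref{thm1.5}(iv)) we get $\core{a}\core{b}R = abR$; on the other hand $a\core{b} = a\core{b}\core{b}b = $ (using $\core{b}$-identities) lies in $abR$ once one checks $a\core{b} = ab\cdot(\text{something})$ — indeed $a\core{b} = a\core{b}b\core{b} = (ab\core{b})\core{b}\cdot$ wait, better: $b\core{b}a = ab\core{b}$ gives $a\core{b}= a\core{b}(b\core{b}) \in aR\core{b}$, and combined with $\core{b}R = bR$ one gets $a\core{b}R \subseteq abR = \core{a}\core{b}R$. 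For the third condition, note $ab\core{(ab)} = ab\core{a}\core{b}$ is Hermitian (it is $P_{ab}$ in the ring sense), so $(ab\core{a}\core{b})^* = ab\core{a}\core{b}$ and $ab(1-(ab\core{a}\core{b})^*) = ab - ab\core{a}\core{b}ab = ab - ab = 0$; then $a^*\core{b}(1-(ab\core{a}\core{b})^*)b$ needs a short manipulation using $a^*\core{b} = a^*b\core{b}\cdot$(via Lemma \ref{2.1}) to reduce it to a multiple of $ab(1-(ab\core{a}\core{b})^*) = 0$.

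For (ii) $\Rightarrow$ (i): this is the substantive direction. Set $x = \core{a}\core{b}$; I will verify the conditions of Theorem \ref{thm1.5}(vii), namely that $ab \in R^{\#}$ (given), that $(ab)x(ab) = ab$, that $((ab)x)^* = (ab)x$, and that $xR \subseteq abR$. The inclusion $xR = \core{a}\core{b}R \subseteq a\core{b}R \subseteq abR$ follows from $a\core{b}R \subseteq \core{a}\core{b}R$ being reversible-enough plus $\core{a}\core{b} = \core{a}(a\core{b})$, and the hypothesis $a\core{b}R \subseteq \core{a}\core{b}R$ gives the first inclusion directly once we also note $\core{a}\core{b}R \subseteq aR \cap \ldots$; actually $\core{a}\core{b} = \core{a} a \core{a}\core{b}$ shows $\core{a}\core{b}R \subseteq \core{a}R = aR$, and then $\core{a}\core{b}R = \core{a}(a\core{b})R \subseteq \core{a}a\core{b}R = (a\core{a})^*\cdots$ — the cleanest route is $xR \subseteq \core{a}R = \core{a}aR$ and separately $a\core{b}R \subseteq abR$, combined via the given inclusion. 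The Hermitian condition $(abx)^* = abx$, i.e. $(ab\core{a}\core{b})^* = ab\core{a}\core{b}$, is exactly where the awkward hypothesis $a^*\core{b}(1-(ab\core{a}\core{b})^*)b = 0$ enters: expanding and using $b\core{b}a = ab\core{b}$ to move $\core{b}$ past $a$, plus Lemma \ref{2.1}(iii)/(iv) to trade $a^*$ for $\core{a}$ and $b$ for $\core{b}$-products, should collapse the hypothesis into $ab\core{a}\core{b} = (ab\core{a}\core{b})^*$. Finally the absorption $ab\core{a}\core{b}ab = ab$ should come from the group-inverse hypothesis on $ab$ together with the range inclusion and the now-established Hermitian idempotency, much as in the matrix proof in \cite{li}. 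I expect the main obstacle to be precisely this third step: correctly unwinding $a^*\core{b}(1-(ab\core{a}\core{b})^*)b = 0$ — keeping track of which of the five parts of Lemma \ref{2.1} licenses each cancellation, and in particular managing that $\core{b}$ only absorbs on the correct side — to recover that $ab\core{a}\core{b}$ is a Hermitian idempotent; the range and group-inverse bookkeeping around it is comparatively routine.
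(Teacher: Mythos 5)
Your overall plan (verify one of the characterizations of Theorem \ref{thm1.5} for $x=\core{a}\core{b}$) is the right kind of strategy, but both directions have genuine gaps. In (ii)$\Rightarrow$(i), the substantive direction, you aim at Theorem \ref{thm1.5}(vii), which requires $xR\subseteq abR$. Hypothesis (ii) supplies the inclusion in the \emph{opposite} direction: since $ab=a\core{b}b^{2}$, the assumption $a\core{b}R\subseteq\core{a}\core{b}R$ gives $abR\subseteq xR$, which is exactly what item (viii) asks for; nothing in (ii) yields $xR\subseteq abR$ (your attempted justifications --- ``reversible-enough'', $xR\subseteq aR$ --- do not produce it, and that inclusion essentially already encodes the conclusion $\core{(ab)}R=abR$). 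So the argument must run through (viii): prove $x(ab)x=x$, $(abx)^{*}=abx$ and $abR\subseteq xR$. The two identities, which you leave as ``should collapse'', are the heart of the proof and do not follow by merely commuting $\core{b}$ past $a$: one writes $a^{*}\core{b}=a^{*}\core{b}(ab\core{a}\core{b})^{*}b\core{b}$ from the hypothesis, takes involution, post-multiplies by $\core{a}(\core{a})^{*}$, takes involution again, and uses $b\core{b}a=ab\core{b}$ together with $b\core{b}b=b$ to get $\core{a}\core{b}=\core{a}\core{b}(ab\core{a}\core{b})^{*}$; pre-multiplying by $ab$ gives $ab\core{a}\core{b}=(ab\core{a}\core{b})(ab\core{a}\core{b})^{*}$, hence $(ab\core{a}\core{b})^{*}=ab\core{a}\core{b}$, and then $\core{a}\core{b}\,ab\,\core{a}\core{b}=\core{a}\core{b}$. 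With these, (viii) finishes; no absorption identity $ab\core{a}\core{b}ab=ab$ is needed.

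In (i)$\Rightarrow$(ii) your key reduction rests on a false identity: $ab\bigl(1-(ab\core{a}\core{b})^{*}\bigr)=ab-(ab)^{2}\core{(ab)}$, which is not $0$ in general (you multiplied on the wrong side); what vanishes is $\bigl(1-(ab\core{a}\core{b})^{*}\bigr)ab$, and this cannot be inserted into $a^{*}\core{b}(1-(ab\core{a}\core{b})^{*})b$ because there the parenthesis is followed by $b$, not by $ab$, and $b\in abR$ is not available. Likewise ``$a^{*}\core{b}=a^{*}b\core{b}$ via Lemma \ref{2.1}'' is not a consequence of Lemma \ref{2.1} and is false in general. The condition is instead obtained by a direct chain: $a^{*}\core{b}b=a^{*}a\core{a}\core{b}b=a^{*}a\core{(ab)}\,ab\,\core{(ab)}b=a^{*}\core{b}\,ab\core{a}\core{b}\,b=a^{*}\core{b}(ab\core{a}\core{b})^{*}b$, using $a^{*}=a^{*}a\core{a}$, the assumption $\core{(ab)}=\core{a}\core{b}$, $\core{(ab)}=\core{(ab)}ab\core{(ab)}$, and the Hermitian-ness of $ab\core{(ab)}$. (Your range step also needs the correct identity $a\core{b}=ab(\core{b})^{2}\in abR$; the manipulations ``$a\core{b}=a\core{b}\core{b}b$'' and ``$a\core{b}\in aR\core{b}$ with $\core{b}R=bR$'' do not give $a\core{b}R\subseteq abR$.) The claims $ab\in R^{\#}$ and $abR=\core{a}\core{b}R$ in this direction are fine.
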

\begin{proof}
(i)$\Rightarrow$(ii):
 From Theorem \ref{thm1.5} (viii), $ab\in R^{\#}.$ Now, $a\core{b}R\subseteq ab(\core{b})^2R\\
 \subseteq \core{(ab)}(ab)^2(\core{b})^2R\subseteq \core{(ab)}R\subseteq \core{a}\core{b}R.$ So, $a\core{b}R\subseteq \core{a}\core{b}R$.  Further, we obtain
\begin{align}\label{equ2.1}
    a^*\core{b}b&=(a\core{a}a)^*\core{b}b\nonumber\\
    &=a^*a\core{a}\core{b}b\nonumber\\
    &=a^*a\core{(ab)}b\nonumber\\
    &=a^*a\core{(ab)}ab\core{(ab)}b\nonumber\\
    &=a^*(a\core{a})^*\core{b}ab\core{a}\core{b}b\nonumber\\
   &=a^*(\core{a})^*a^*\core{b}ab\core{a}\core{b}b\nonumber\\
   &=a^*\core{b}ab\core{a}\core{b}b\nonumber\\
   &=a^*\core{b}ab\core{(ab)}b\nonumber\\
   &=a^*\core{b}(ab\core{(ab)})^*b\nonumber\\
   &=a^*\core{b}(ab\core{a}\core{b})^*b.
  \end{align}
  Equation \eqref{equ2.1} implies that 
 $a^*\core{b}(1-(ab\core{a}\core{b})^*)b=0$.\\
 (ii)$\Rightarrow$(i):  We will show that  $\core{(ab)}=\core{a}\core{b}$ by using Theorem \ref{thm1.5} (viii). Setting $x=\core{a}\core{b}$. From  part (ii), we have   $a^*\core{b}(1-(ab\core{a}\core{b})^*)b=0$, i.e.,  $a^*\core{b}b=a^*\core{b}(ab\core{a}\core{b})^*b.$ We can write, $a^*\core{b}=(a^*\core{b}b)\core{b}$, which implies that $a^*\core{b}=a^*\core{b}(ab\core{a}\core{b})^*b\core{b}$. Taking involution both sides in last equality $a^*\core{b}=a^*\core{b}(ab\core{a}\core{b})^*b\core{b}$, we get
\begin{align}\label{eq2.2}
    (\core{b})^*a&=(b\core{b})^*ab\core{a}\core{b}(\core{b})^*a.
    \end{align}
    Post-multiplying  $\core{a}(\core{a})^*$ in \eqref{eq2.2}, we obtain
    \begin{align}\label{eq2.3}
    (\core{b})^*(\core{a})^*&=(b\core{b})^*ab\core{a}\core{b}(\core{b})^*(\core{a})^*,\nonumber
    \end{align}
    i.e.,
    \begin{align}
    (\core{a}\core{b})^*&=(b\core{b})^*ab\core{a}\core{b}(\core{a}\core{b})^*.
    \end{align}
    Again, taking involution both sides in \eqref{eq2.3}, we get
    \begin{align}\label{eq2.4}
  \core{a}\core{b}&=\core{a}\core{b}(ab\core{a}\core{b})^*(b\core{b})^*\nonumber\\
   &=\core{a}\core{b}(b\core{b}ab\core{a}\core{b})^*\nonumber\\
      &=\core{a}\core{b}(ab\core{b}b\core{a}\core{b})^*\nonumber\\
      &=\core{a}\core{b}(ab\core{a}\core{b})^*.
      \end{align}
      Pre-multiplying $ab$ in \eqref{eq2.4}, we obtain
      \begin{align}\label{eq2.5}
   ab\core{a}\core{b}&=ab\core{a}\core{b}(ab\core{a}\core{b})^*.
\end{align}
By equation \eqref{eq2.5}, we have $(ab\core{a}\core{b})^*=ab\core{a}\core{b}$, i.e., $(abx)^*=abx$. The previous equality $(ab\core{a}\core{b})^*=ab\core{a}\core{b}$ and equation \eqref{eq2.4} imply that $\core{a}\core{b}ab\core{a}\core{b}=\core{a}\core{b}$, i.e., $xabx=x$. From  $a\core{b}R\subseteq \core{a}\core{b}R=xR$, we get $a\core{b}=\core{a}\core{b}y,$ where $y\in R$. Then, $ab=a\core{b}b^2=\core{a}\core{b}yb^2$. And $abR\subseteq \core{a}\core{b}yb^2R\subseteq \core{a}\core{b}R$, i.e., $abR\subseteq \core{a}\core{b}R=xR$. By Theorem \ref{thm1.5} (viii), we thus have  $ab\in \core{R}$ and $\core{(ab)}=x=\core{a}\core{b}.$ 
\end{proof}
The next result provides sufficient conditions for which the set of core invertible elements satisfies the commutative property. 
\begin{theorem}\label{th2.7}
Let $a,b\in \core{R}$ with $a^*\core{b}=\core{b}a^*$ and $bab=ab^2$. If $a\core{b}=\core{b}a$ and $\core{a}b=b\core{a}$, then $\core{(ab)}=\core{a}\core{b}=\core{b}\core{a}.$
\end{theorem}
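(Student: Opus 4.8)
The plan is to prove the statement by showing directly that $x:=\core{a}\core{b}$ is the core inverse of $ab$ via Xu et al.'s characterization (the three conditions $(abx)^*=abx$, $ab\,x^2=x$, and $x(ab)^2=ab$), and to get the second equality $\core{a}\core{b}=\core{b}\core{a}$ essentially for free.

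First I would record two preparatory identities. \textbf{(A)} Since $\core{b}a=a\core{b}$, $\core{b}a^*=a^*\core{b}$ and $a\in\core{R}$, Lemma~\ref{t2.1} (applied with the roles ``$a$''$=a$ and ``$x$''$=\core{b}$) gives $\core{b}\core{a}=\core{a}\core{b}$; this already disposes of the equality $\core{a}\core{b}=\core{b}\core{a}$ and will be reused below. \textbf{(B)} Post-multiplying $bab=ab^2$ by $(\core{b})^2$ and using $b\core{b}^2=\core{b}$ gives $ab\core{b}=ba\core{b}$, and then $a\core{b}=\core{b}a$ turns this into $ab\core{b}=b\core{b}a$.

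Next I would run the three verifications. Using $b\core{a}=\core{a}b$ one rewrites $ab\core{a}\core{b}=a\core{a}b\core{b}$; since $a\core{a}$ and $b\core{b}$ are Hermitian idempotents, its involution is $b\core{b}a\core{a}$, and chaining $\core{a}b=b\core{a}$, (A) and (B) shows $a\core{a}b\core{b}=b\core{b}a\core{a}$, so $(ab\core{a}\core{b})^*=ab\core{a}\core{b}$. For the second condition, $ab\,x^2=a\core{a}b\core{b}\core{a}\core{b}$ collapses to $\core{a}\core{b}=x$ upon applying (A), then $b\core{a}=\core{a}b$, and finally the identities $a\core{a}^2=\core{a}$ and $b\core{b}^2=\core{b}$. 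For the third, $x(ab)^2=\core{a}\core{b}abab$ reduces to $ab$ by applying $\core{b}a=a\core{b}$, then $bab=ab^2$ together with $\core{b}a=a\core{b}$ and $\core{b}b^2=b$ (which collapse $\core{b}bab$ to $ab$), and finally $\core{a}a^2=a$. Hence $ab\in\core{R}$ with $\core{(ab)}=\core{a}\core{b}$, and with (A) we conclude $\core{(ab)}=\core{a}\core{b}=\core{b}\core{a}$.

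All the manipulations are routine term-rewriting once (A) and (B) are in hand; the only point that requires a moment's thought is identity (B). Indeed, the hypothesis $bab=ab^2$ is nothing but the assertion that $b$ commutes with $ab$, and it is precisely this commutation, pushed through $\core{b}$, that yields $ab\core{b}=b\core{b}a$ — the identity that makes the Hermitian-symmetry condition close up. I do not anticipate any genuine difficulty beyond careful bookkeeping of the commutation relations and the core-inverse identities.
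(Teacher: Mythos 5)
Your proposal is correct: taking $x=\core{a}\core{b}$, the preparatory identities (A) $\core{b}\core{a}=\core{a}\core{b}$ (a legitimate application of Lemma \ref{t2.1} with the commutations $\core{b}a=a\core{b}$, $\core{b}a^*=a^*\core{b}$) and (B) $ab\core{b}=b\core{b}a$ (from $bab=ab^2$ post-multiplied by $(\core{b})^2$, then $a\core{b}=\core{b}a$) do make all three equations $(abx)^*=abx$, $abx^2=x$, $x(ab)^2=ab$ close up, and I checked each chain ($ab\core{a}\core{b}=a\core{a}b\core{b}=ab\core{b}\core{a}=b\core{b}a\core{a}$ for Hermitian-ness, $a\core{a}b\core{b}\core{a}\core{b}=a(\core{a})^2\core{b}=\core{a}\core{b}$, and $\core{a}a\core{b}bab=\core{a}a\,ab=ab$). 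This is, however, a different verification scheme from the paper's: the paper keeps the same candidate $x=\core{a}\core{b}$ but certifies it through Theorem \ref{thm1.5}(ii), i.e.\ it checks $ab\,x\,ab=ab$, $xR=abR$, and $Rx\subseteq R(ab)^*$, using the hypothesis $a^*\core{b}=\core{b}a^*$ directly inside the inclusion chain $R\core{a}\core{b}\subseteq Ra^*\core{b}\subseteq R\core{b}a^*\subseteq R(ab)^*$, and it only invokes Lemma \ref{t2.1} at the very end to obtain $\core{a}\core{b}=\core{b}\core{a}$. Your route, based on the purely equational characterization of Xu \emph{et al.} quoted in the preliminaries, avoids the range/annihilator conditions altogether and is closer in spirit to the paper's own proof of Theorem \ref{thm2.5}; what it buys is a shorter, self-contained term-rewriting argument in which the starred commutation enters only through Lemma \ref{t2.1}, whereas the paper's argument illustrates the inner-inverse-plus-range characterization at the cost of handling the two ideal inclusions explicitly. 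The key auxiliary identity (B) is exactly the point where the hypothesis $bab=ab^2$ is consumed in both proofs, so the two arguments rest on the same commutation bookkeeping.
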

\begin{proof}
First we will show that  $\core{(ab)}=\core{a}\core{b}$ by using Theorem \ref{thm1.5} (ii). Setting $x=\core{a}\core{b}$, we obtain
\begin{align*}
    abxab=ab\core{a}\core{b}ab&=a\core{a}b\core{b}ab\\
    &=a\core{a}ba\core{b}b\\
    &=a\core{a}bab(\core{b})^2b\\
    &=a\core{a}ab^2(\core{b})^2b\\
    &=ab\core{b}b\\
    &=ab.
\end{align*}
So, the first condition of Theorem \ref{thm1.5} (ii) is satisfied. Next to show that $xR = abR$ which is the second condition of Theorem \ref{thm1.5} (ii). By hypothesis, we have $\core{a}b=b\core{a}$. Thus     $$\core{a}\core{b}R\subseteq a(\core{a})^2b(\core{b})^2R\subseteq ab(\core{a})^2(\core{b})^2R\subseteq abR, ~\mbox{i.e.,}~ \core{a}\core{b}R\subseteq abR.$$
  Hence, $xR \subseteq abR$.  Conversely, we have $\core{b}a=a\core{b}$. So, $abR\subseteq \core{a}a^2\core{b}b^2R\subseteq \core{a}\core{b}a^2b^2R\subseteq \core{a}\core{b}R,$ i.e, $abR\subseteq \core{a}\core{b}R$. So, $abR\subseteq xR$.  Hence,
    $abR=xR.$
    One can now apply  Theorem \ref{thm1.5} (ii) if  the third condition $Rx \subseteq R(ab)^*$ holds. We have 
    $Rx=R\core{a}\core{b}\subseteq R\core{a}a\core{a}\core{b}\subseteq R(a\core{a})^*\core{b}\subseteq R(\core{a})^*a^*\core{b}\subseteq Ra^*\core{b}\subseteq R\core{b}a^*
    \subseteq R\core{b}b\core{b}a^*\subseteq R(b\core{b})^*a^*\subseteq R(\core{b})^*b^*a^*\subseteq R(ab)^*$, i.e., $Rx=R\core{a}\core{b}\subseteq R(ab)^*.$
 Hence,  Theorem \ref{thm1.5} (ii) yields $$\core{(ab)}=\core{a}\core{b}.$$ We also have $a^*\core{b}=\core{b}a^*$ and $a\core{b}=\core{b}a$.  By Lemma \ref{t2.1}, we obtain $\core{a}\core{b}=\core{b}\core{a}.$ Thus, $$\core{(ab)}=\core{a}\core{b}=\core{b}\core{a}.$$
\end{proof}

\noindent  For $a,b\in \core{R}$, one can show that the conditions  $ab=ba$ and $ab^*=b^*a$ imply $a^*\core{b}=\core{b}a^*$, $bab=ab^2$, $a\core{b}=\core{b}a$ and $\core{a}b=b\core{a}$. The next result replaces the four conditions by the above mentioned two conditions, and can be proved  similarly as the above one.
\begin{remark}\label{t2.20}
Let $a,b\in \core{R}$. If $ab=ba$ and $ab^*=b^*a$, then $\core{(ab)}=\core{a}\core{b}=\core{b}\core{a}$.
\end{remark}
\noindent 
Kumar and Mishra \cite{Kumar2} proposed the following result for idempotent elements.

\begin{theorem}\label{l2.2}(Theorem 2.11, \cite{Kumar2})\\
Let $a,b \in R$, and $x,y\in R$ be two idempotent elements. Then, the following hold:
\begin{enumerate}[(i)]
    \item  $(1-x)a=b$ if and only if $xb=0$ and ${^{\circ}(x)}\subset{^{\circ}(a-b)};$
    \item $a(1-y)=b$ if and only if $by=0$ and $ (y)^{\circ}\subset (a-b)^{\circ}.$
\end{enumerate}
\end{theorem}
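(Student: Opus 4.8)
The plan is to prove each biconditional directly, treating (i) and (ii) as mirror images of one another under the involution-free left/right symmetry of the annihilator conditions. The whole argument rests on a single observation: because $x$ is idempotent we have $(1-x)x = x - x^2 = 0$, so the element $1-x$ itself lies in ${^{\circ}(x)}$, and likewise $y(1-y)=0$ places $1-y$ in $(y)^{\circ}$. Feeding this distinguished element into the annihilator inclusion is what converts the abstract containment hypothesis into a usable equation.

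For the forward implication of (i), I would start from $(1-x)a=b$, rewrite it as $a-b = xa$, and then read off the two asserted conditions. First, $xb = x(1-x)a = (x-x^2)a = 0$ using idempotency. Second, for any $w$ with $wx=0$ we get $w(a-b) = w(xa) = (wx)a = 0$, which is exactly ${^{\circ}(x)}\subseteq{^{\circ}(a-b)}$. For the converse — the step I regard as the crux — I assume $xb=0$ and ${^{\circ}(x)}\subseteq{^{\circ}(a-b)}$, and apply the inclusion to $w = 1-x$. Since $(1-x)x=0$ by idempotency, $1-x\in{^{\circ}(x)}\subseteq{^{\circ}(a-b)}$, hence $(1-x)(a-b)=0$. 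This rearranges to $(1-x)a = (1-x)b = b - xb = b$, where the last equality uses $xb=0$. That yields $(1-x)a=b$ and closes part (i).

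Part (ii) is the exact transpose: from $a(1-y)=b$ I get $a-b = ay$, then $by = a(1-y)y = a(y-y^2)=0$, and $yw=0$ forces $(a-b)w = a(yw) = 0$, giving $(y)^{\circ}\subseteq(a-b)^{\circ}$. Conversely, idempotency gives $y(1-y)=0$, so $1-y\in(y)^{\circ}\subseteq(a-b)^{\circ}$, whence $(a-b)(1-y)=0$ and therefore $a(1-y) = b(1-y) = b - by = b$, using $by=0$.

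There is no serious computational obstacle here; every step is a one-line manipulation. The only genuine idea is choosing to evaluate the annihilator containment at the specific test element $1-x$ (respectively $1-y$), and recognizing that idempotency is precisely what guarantees this element sits in ${^{\circ}(x)}$ (respectively $(y)^{\circ}$). Once that choice is made, both directions of both equivalences follow mechanically, and idempotency is used in exactly two places in each part: once to verify the annihilation condition $xb=0$ (resp. $by=0$) in the forward direction, and once to locate $1-x$ (resp. $1-y$) inside the annihilator in the converse.
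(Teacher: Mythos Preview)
Your proof is correct. However, the paper does not supply its own proof of this statement: it is quoted verbatim as Theorem~2.11 from \cite{Kumar2} and invoked as a black-box tool, so there is nothing in the present paper to compare your argument against. On its own merits your argument is the natural one --- the idempotency of $x$ (resp.\ $y$) is used exactly where it must be, and evaluating the annihilator inclusion at the test element $1-x$ (resp.\ $1-y$) is the only real idea needed.
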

\noindent  If $b\in \core{R}$, then $b\core{b}$ is an idempotent element. By Theorem \ref{l2.2},  we thus have the following remark.
\begin{remark}
Let $a,b\in \core{R}$ with $ab=b\core{b}ab$. Then  ${^{\circ}}(b\core{b})\subseteq {^{\circ}}(ab)$.
\end{remark}
\noindent A characterization  of the forward-order law is presented below.
\begin{theorem}
Let $a,b,ab\in\core{R}$. Then,  $\core{(ab)}={\core{a}\core{b}}$ if and only if
     $a^*a\core{(ab)}=a^*\core{b}.$
    \end{theorem}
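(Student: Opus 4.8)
The plan is to prove the biconditional $\core{(ab)}=\core{a}\core{b}$ iff $a^*a\core{(ab)}=a^*\core{b}$ by exploiting Lemma \ref{2.1}(iii), which converts an identity of the form $a^*u=a^*v$ into $\core{a}u=\core{a}v$ and conversely. Indeed, $a^*a\core{(ab)}=a^*\core{b}$ is equivalent (apply Lemma \ref{2.1}(iii) with $b\mapsto a\core{(ab)}$, $c\mapsto\core{b}$, noting $a^*a\core{(ab)}=a^*(a\core{(ab)})$) to $\core{a}a\core{(ab)}=\core{a}\core{b}$. So the whole theorem reduces to showing $\core{(ab)}=\core{a}\core{b}\iff\core{a}a\core{(ab)}=\core{a}\core{b}$.

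For the forward direction, assume $\core{(ab)}=\core{a}\core{b}$. First I would record the basic identity $\core{a}a\core{a}=\core{a}$, so $\core{a}a\core{a}\core{b}=\core{a}\core{b}$, i.e. $\core{a}a(\core{a}\core{b})=\core{a}\core{b}$; substituting the hypothesis $\core{a}\core{b}=\core{(ab)}$ gives $\core{a}a\core{(ab)}=\core{a}\core{b}$ immediately. (One should double-check whether one instead needs $\core{a}a\core{(ab)}=\core{a}a\core{a}\core{b}=\core{a}\core{b}$; this is exactly the chain just written.) So the forward implication is essentially a one-line consequence of the absorption identity for the core inverse together with the assumed equality.

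For the converse, assume $\core{a}a\core{(ab)}=\core{a}\core{b}$, equivalently $a^*a\core{(ab)}=a^*\core{b}$. The goal is to verify $\core{(ab)}=\core{a}\core{b}$, and since $ab\in\core{R}$ is assumed, it suffices to show $\core{a}\core{b}$ equals the (unique) core inverse of $ab$; I would pre-multiply the hypothesis by $\core{a}$-type factors and use $\core{(ab)}=\core{(ab)}ab\core{(ab)}$, $ab\core{(ab)}ab=ab$ and the Hermitian property $(ab\core{(ab)})^*=ab\core{(ab)}$. Concretely, starting from $\core{a}\core{b}$ I would try to run the chain $\core{a}\core{b}=\core{a}a\core{(ab)}=\core{a}a\,ab\core{(ab)}\core{(ab)}$ and then collapse $\core{a}a\,ab=\core{a}a^2b$; using $\core{a}a^2=a$ this becomes $ab\core{(ab)}\core{(ab)}=\core{(ab)}$. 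Thus the converse also reduces to a short absorption computation, the one genuinely needed input being $\core{a}a^2=a$ (a defining property of the core inverse) to kill the leading $\core{a}a$ after it has been pushed past one copy of $a$.

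The main obstacle I anticipate is bookkeeping: making sure each manipulation uses only legitimate core-inverse identities ($a\core{a}a=a$, $\core{a}a\core{a}=\core{a}$, $a\core{a}^2=\core{a}$, $\core{a}a^2=a$, $(a\core{a})^*=a\core{a}$) and that the translation back and forth via Lemma \ref{2.1}(iii) is applied with the correct element in the "$a^*$" slot — in particular that $a^*a\core{(ab)}$ is being read as $a^*\cdot(a\core{(ab)})$ so that (iii) legitimately yields $\core{a}\cdot(a\core{(ab)})=\core{a}\core{b}$. Once that dictionary is fixed, both directions are short; I would present the converse first passing to $\core{a}a\core{(ab)}=\core{a}\core{b}$ and then the two-line absorption arguments in each direction.
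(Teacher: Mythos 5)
Your proof is correct and follows essentially the same route as the paper: the paper also reduces the problem to the identity $\core{a}a\core{(ab)}=\core{a}\core{b}$ and then concludes via the same absorption chain $\core{a}a\core{(ab)}=\core{a}a\,ab(\core{(ab)})^2=ab(\core{(ab)})^2=\core{(ab)}$. The only cosmetic difference is that you invoke Lemma \ref{2.1}(iii) to pass between $a^*(\cdot)$ and $\core{a}(\cdot)$ equalities, whereas the paper performs the corresponding pre-multiplications by $a^*a$, $(\core{a})^*$ and $\core{a}$ explicitly, which is exactly how that lemma is proved.
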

\begin{proof}
     We know that $(a\core{a})^*=(\core{a})^*a^*$. Now, pre-multiplying $a^*a$ in $\core{(ab)}=\core{a}\core{b}$, we obtain $a^*a\core{(ab)}=a^*(\core{a})^*a^*\core{b}$ which implies $a^*a\core{(ab)}=a^*\core{b}.$ Conversely,
  pre-multiplying $(\core{a})^*$ in $a^*a\core{(ab)}=a^*\core{b}$, we get $a\core{(ab)}=a\core{a}\core{b}.$ Again, pre-multiplying $\core{a}$ in $a\core{(ab)}=a\core{a}\core{b}$, we get $\core{a}a\core{(ab)}=\core{a}\core{b}.$ Further, we have
    $$\core{a}a\core{(ab)}=\core{a}a(ab)\core{(ab)}\core{(ab)}=ab\core{(ab)}\core{(ab)}=\core{(ab)}.$$
    Hence, $\core{(ab)}=\core{a}\core{b}.$
    
\end{proof}
We now show that the forward-order law for the weighted core inverse holds under the assumption $ab=b^2$.

\begin{theorem}
Let $a,b\in R^{\dc{}}$. If $ab=b^2$, then
\begin{enumerate}[(i)]
    \item $ab\in R^{\dc{}}$ and $(ab)^{\dc{}}=a^{\dc{}}b^{\dc{}}$;
    \item $aa^{\dc{}}b^{\dc{}}(baa^{\dc{}})^2=baa^{\dc{}}$,  $ebaa^{\dc{}}(aa^{\dc{}}b^{\dc{}})^2=eb^{\dc{}}$ and ${(ebaa^{\dc{}}b^{\dc{}})^*}=ebaa^{\dc{}}b^{\dc{}}.$
\end{enumerate}
\end{theorem}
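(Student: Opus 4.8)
The plan is to prove (i) by checking directly that $x:=a^{\dc{}}b^{\dc{}}$ is the $e$-weighted core inverse of $ab$, i.e.\ that $(ab)x^{2}=x$, $x(ab)^{2}=ab$ and $(e(ab)x)^{*}=e(ab)x$, and then to obtain (ii) by rewriting those same identities. Write $u=a^{\dc{}}$, $v=b^{\dc{}}$; from the definition we have the standard identities $aua=a$, $uau=u$ (so $au,ua$ are idempotent), $uR=aR$, and likewise $bvb=b$, $vbv=v$, $bv^{2}=v$, $vb^{2}=b$, $vR=bR$, $(ebv)^{*}=ebv$. From $ab=b^{2}$ one gets $a^{2}b=a(ab)=ab^{2}=(ab)b=b^{3}$, and $b=b^{2}v^{2}b\in b^{2}R$ (since $b^{2}v^{2}=b(bv^{2})=bv$ and $(bv)b=b$), so $bR=b^{2}R$ and hence $b\in bR=b^{2}R=(ab)R\subseteq aR$. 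Since $au$ and $ua$ are idempotents whose right ideal is exactly $aR$, they act as the identity on $aR$; therefore $aub=b$, $uab=b$, $auv=v$, and in particular $ub^{2}=u(ab)=(ua)b=b$.

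The crux — and the step I expect to be the main obstacle — is the identity $ub=vb$ (equivalently $bub=b$). To get it, set $r=u-v$; then $rb^{2}=ub^{2}-vb^{2}=b-b=0$, so $rbv=rb^{2}v^{2}=0$ (using $b^{2}v^{2}=bv$ once more), and hence $rb=r(bvb)=(rbv)b=0$; thus $ub=vb$ and $bub=bvb=b$. Granting this, the three verifications are short. First, $x(ab)^{2}=uvb^{4}=u(vb^{2})b^{2}=ub^{3}=u(a^{2}b)=(ua^{2})b=ab$. Second, using $v=bv^{2}$ and $ub=vb$,
\[
(ab)x=b^{2}uv=b^{2}u(bv^{2})=b^{2}(ub)v^{2}=b^{2}(vb)v^{2}=b^{2}v(bv^{2})=b^{2}v^{2}=bv ,
\]
so $(e(ab)x)^{*}=(ebv)^{*}=ebv=e(ab)x$. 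Third, $(ab)x^{2}=\big((ab)x\big)x=(bv)(uv)$, and since $uvR=u(vR)=u(b^{2}R)=(ub^{2})R=bR$ is precisely the right ideal of the idempotent $bv$, we get $(bv)(uv)=uv=x$. Hence $(ab)^{\dc{}}=x=a^{\dc{}}b^{\dc{}}$, which is (i). (Alternatively these data verify $(ab)x(ab)=(bv)b^{2}=b^{2}=ab$, $(e(ab)x)^{*}=e(ab)x$, $xR=bR=(ab)R$, the weighted counterpart of the characterisation in Theorem~\ref{thm1.5}(vii).)

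For (ii) I would substitute $ab=b^{2}$ and the identities above into the three defining equations of $(ab)^{\dc{}}=uv$. Thus $bau(auv)^{2}=bauv^{2}=bv^{2}=v$ (by $auv=v$ and $bv^{2}=v$), giving $ebau(auv)^{2}=ev$; next $auv(bau)^{2}=v\,bau\,bau=vb(aub)au=vb^{2}au=(vb^{2})au=bau$ (by $auv=v$, $aub=b$, $vb^{2}=b$); and $(ebauv)^{*}=(ebv)^{*}=ebv=ebauv$ (by $bauv=bv$ and $(ebv)^{*}=ebv$). The only genuinely delicate points in the whole argument are the key lemma $ub=vb$ of the second paragraph and, immediately before it, the placement $b\in aR$ coming from $bR=b^{2}R$; after that everything is manipulation with the defining equations, and the sole verification that needs a range/idempotent argument rather than a bare identity is $(ab)x^{2}=x$.
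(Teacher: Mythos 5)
Your proposal is correct --- I checked the identities and the range arguments all go through: the ``standard identities'' $a a^{\dc{}} a=a$, $a^{\dc{}} a a^{\dc{}}=a^{\dc{}}$ do follow from the three defining equations (e.g.\ $aa^{\dc{}}a=aa^{\dc{}}(a^{\dc{}}a^2)=(a(a^{\dc{}})^2)a^2=a^{\dc{}}a^2=a$), $b\in b^2R=(ab)R\subseteq aR$ is valid, and your lemma $a^{\dc{}}b=b^{\dc{}}b$ holds as you argue. The overall strategy is the same as the paper's: both verify the three defining equations $x(ab)^2=ab$, $(ab)x^2=x$, $(e(ab)x)^*=e(ab)x$ for $x=a^{\dc{}}b^{\dc{}}$, anchored on the key identity $a^{\dc{}}b^2=b$. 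The routes to the auxiliary facts differ, though. The paper gets $a^{\dc{}}b^2=b$ by invoking the group inverse of $b$ (writing $b=b^2b^{\#}$, hence $b=abb^{\#}=a^{\dc{}}a^2bb^{\#}=a^{\dc{}}b^3b^{\#}=a^{\dc{}}b^2$) and then proves everything, including part (ii), by explicit substitution chains built on $b^{\dc{}}=b^2(b^{\dc{}})^3$ and $bb^{\dc{}}=ab^{\dc{}}$; you instead obtain it from $b\in aR$ together with the idempotent $a^{\dc{}}a$ acting as the identity on $aR$, introduce the auxiliary identity $a^{\dc{}}b=b^{\dc{}}b$ (not used in the paper), and replace the paper's longest computation, $(ab)(a^{\dc{}}b^{\dc{}})^2=a^{\dc{}}b^{\dc{}}$, by the range/idempotent argument $a^{\dc{}}b^{\dc{}}\in bR=bb^{\dc{}}R$. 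Your version is more structural and makes part (ii) nearly immediate from $aa^{\dc{}}b^{\dc{}}=b^{\dc{}}$ and $aa^{\dc{}}b=b$, at the cost of a short detour through right ideals; the paper's is purely equational and avoids any discussion of ideals, needing instead the (implicitly assumed) group invertibility of $b$. Both are complete proofs of the stated theorem.
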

\begin{proof}
\begin{enumerate}[(i)]
    \item
By the group inverse definition, we can write $b=b^2b^{\#}$. The hypothesis $ab=b^2$ and  $b=b^2b^{\#}$ imply that $b=abb^{\#}$, i.e., $b=a^{\dc{}}a^2bb^{\#}$, i.e., $b=a^{\dc{}}b^3b^{\#}$. So, $b=a^{\dc{}}b^2$. Again, by $ab=b^2$, we get
$b^{\dc{}}b=(b^{\dc{}})^2b^2=(b^{\dc{}})^2ab$. Similarly, $bb^{\dc{}}=b^2(b^{\dc{}})^2=ab(b^{\dc{}})^2=ab^{\dc{}}$. Further, we get
\begin{align*}
    a^{\dc{}}b^{\dc{}}abab&=a^{\dc{}}b^{\dc{}}b^2b^2\\
   & =a^{\dc{}}bb^2\\
    &=b^2\\
    &=ab,  
\end{align*}
\begin{align*}
    aba^{\dc{}}b^{\dc{}}a^{\dc{}}b^{\dc{}}&=aba^{\dc{}}b^{\dc{}}a^{\dc{}}b^2(b^{\dc{}})^3\\
&=aba^{\dc{}}b^{\dc{}}b(b^{\dc{}})^3\\
&=aba^{\dc{}}(b^{\dc{}})^3\\
&=a^{\dc{}}a^2ba^{\dc{}}(b^{\dc{}})^3\\
&=a^{\dc{}}b^3a^{\dc{}}(b^{\dc{}})^3\\
&=a^{\dc{}}b^3a^{\dc{}}b^2(b^{\dc{}})^5\\
&=a^{\dc{}}b^4(b^{\dc{}})^5\\
&=a^{\dc{}}b^{\dc{}},
\end{align*}
and
\begin{align*}
    (eaba^{\dc{}}b^{\dc{}})^*&=(eb^2a^{\dc{}}b^2(b^{\dc{}})^3)^*\\
    &=(eb^3(b^{\dc{}})^3)^*
       \end{align*}
\begin{align*}
    &=(ebb^{\dc{}})^*\\
    &=ebb^{\dc{}}\\
    &=eaba^{\dc{}}b^{\dc{}}.
\end{align*}
Hence, $(ab)^{\dc{}}=a^{\dc{}}b^{\dc{}}.$
\item Putting $a^{\dc{}}b^{\dc{}}$ directly in equations, we have
\begin{align}\label{kul}
    ebaa^{\dc{}}b^{\dc{}}
    &=ebaa^{\dc{}}b^2(b^{\dc{}})^3\nonumber\\
    &=ebaa^{\dc{}}ab(b^{\dc{}})^3\nonumber\\
    &=ebab(b^{\dc{}})^3\nonumber\\
    &=eb^3(b^{\dc{}})^3\nonumber\\
    &=ebb^{\dc{}},
\end{align}
which implies $(ebaa^{\dc{}}b^{\dc{}})^*=ebaa^{\dc{}}b^{\dc{}}$. Further, we obtain
\begin{align*}
    aa^{\dc{}}b^{\dc{}}(baa^{\dc{}})^2&=aa^{\dc{}}b^{\dc{}}baa^{\dc{}}baa^{\dc{}}\\
    &= aa^{\dc{}}b^2(b^{\dc{}})^3baa^{\dc{}}baa^{\dc{}}\\
    &=aa^{\dc{}}ab(b^{\dc{}})^3baa^{\dc{}}baa^{\dc{}}\\
    &=ab(b^{\dc{}})^3baa^{\dc{}}baa^{\dc{}}\\
    &=b^2(b^{\dc{}})^3baa^{\dc{}}baa^{\dc{}}\\
    &=b^{\dc{}}baa^{\dc{}}b^{\dc{}}b^2aa^{\dc{}}\\
    &=b^{\dc{}}baa^{\dc{}}b^2(b^{\dc{}})^3b^2aa^{\dc{}}\\
    &=b^{\dc{}}baa^{\dc{}}ab(b^{\dc{}})^3b^2aa^{\dc{}}\\
    &=b^{\dc{}}bab(b^{\dc{}})^3b^2aa^{\dc{}}\\
    &=b^{\dc{}}bb^2(b^{\dc{}})^3b^2aa^{\dc{}}\\
    &=b^2(b^{\dc{}})^3b^2aa^{\dc{}}\\
    &=b(b^{\dc{}})^2b^2aa^{\dc{}}\\
    &=baa^{\dc{}},
\end{align*}
and
\begin{align*}
    ebaa^{\dc{}}(aa^{\dc{}}b^{\dc{}})^2&=ebaa^{\dc{}}aa^{\dc{}}b^{\dc{}}(aa^{\dc{}}b^{\dc{}})\\
    &=ebaa^{\dc{}}b^{\dc{}}(aa^{\dc{}}b^{\dc{}}) \\
    &=ebb^{\dc{}}(aa^{\dc{}}b^{\dc{}})\text{ (By equation $\eqref{kul}$)}\\
    &=ebb^{\dc{}}aa^{\dc{}}b^2(b^{\dc{}})^3\\
    &=ebb^{\dc{}}aa^{\dc{}}ab(b^{\dc{}})^3
       \end{align*}
\begin{align*}
  &=ebb^{\dc{}}ab(b^{\dc{}})^3\\
      &=ebb^{\dc{}}b^2(b^{\dc{}})^3\\
       &=eb^{\dc{}}.
\end{align*}
\end{enumerate}
\end{proof}

\begin{theorem}\label{thm3.3}
Let $a,b\in \core{R}$. Then, 
\begin{enumerate}[(i)]
    \item $(ab)^{\dagger}=\core{a}\core{b}$ if and only if $abR\subseteq bR$ and $\core{a}b=(ab)^{\dagger}b^2;$
    \item $(ab)^{\dagger}=\core{a}\core{b}$ if and only if $Rab\subseteq Ra^*$ and $a^*\core{b}=a^*a(ab)^{\dagger};$
\end{enumerate}
 \end{theorem}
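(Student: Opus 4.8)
The plan is to prove (i) in full and then obtain (ii) by the analogous argument with the roles of left and right interchanged under the involution. I will use repeatedly the core-inverse identities $b\core{b}b=b$, $\core{b}b\core{b}=\core{b}$, $(b\core{b})^*=b\core{b}$, $\core{b}b^2=b$, $b\core{b}^2=\core{b}$ and their analogues for $a$, together with three immediate $*$-consequences: $a^*a\core{a}=a^*$ (apply $*$ to $a\core{a}a=a$ and use $(a\core{a})^*=a\core{a}$), $b\core{b}(\core{b})^*=(\core{b})^*$ (apply $*$ to $\core{b}b\core{b}=\core{b}$ and use $(b\core{b})^*=b\core{b}$), and $(\core{a})^*a\core{a}=(\core{a})^*$ (from $(a\core{a})^*=a\core{a}$ and $a\core{a}^2=\core{a}$). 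I also record two reformulations of the ideal conditions: $abR\subseteq bR\Longleftrightarrow b\core{b}ab=ab$ (if $ab=bt$ then $b\core{b}ab=b\core{b}bt=bt=ab$, and conversely $abr=b(\core{b}abr)\in bR$), and, since $R\core{a}=Ra^*$ by the definition of the core inverse, $Rab\subseteq Ra^*\Longleftrightarrow aba\core{a}=ab$.

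For the forward implication of (i), assume $(ab)^{\dagger}=\core{a}\core{b}$, so that $x=\core{a}\core{b}$ satisfies the Penrose equations (1.)--(4.) for $c=ab$. The equation $\core{a}b=(ab)^{\dagger}b^2$ is then automatic, because $(ab)^{\dagger}b^2=\core{a}\core{b}b^2=\core{a}b$ by $\core{b}b^2=b$. For the range inclusion, use equation (3.): $(ab\core{a}\core{b})^*=ab\core{a}\core{b}$; expanding the left-hand side gives $ab\core{a}\core{b}=(\core{b})^*(\core{a})^*b^*a^*$, so that multiplying on the left by $b\core{b}$ and using $b\core{b}(\core{b})^*=(\core{b})^*$ yields $b\core{b}\,(ab\core{a}\core{b})=ab\core{a}\core{b}$. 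Post-multiplying by $ab$ and invoking equation (1.) in the form $ab\core{a}\core{b}ab=ab$ gives $b\core{b}ab=ab$, i.e. $abR\subseteq bR$.

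For the converse of (i), assume $abR\subseteq bR$ and $\core{a}b=(ab)^{\dagger}b^2$ (so in particular $ab\in R^{\dagger}$). From the first hypothesis $ab=b\core{b}ab$. From the second, writing $(ab)^{\dagger}b^2=\big((ab)^{\dagger}b\big)b$ and applying Lemma~\ref{2.1}(iv) (with $b$ in the role of $a$) to $\core{a}b=\big((ab)^{\dagger}b\big)b$ gives $\core{a}\core{b}=(ab)^{\dagger}b\core{b}$. It remains to show $(ab)^{\dagger}b\core{b}=(ab)^{\dagger}$: apply $*$ to $ab=b\core{b}ab$ to get $(ab)^*=(ab)^*b\core{b}$; pre-multiply by $\big((ab)^{\dagger}\big)^*$ and use the Hermitian equation $\big(ab(ab)^{\dagger}\big)^*=ab(ab)^{\dagger}$ to get $ab(ab)^{\dagger}=ab(ab)^{\dagger}b\core{b}$; finally pre-multiply by $(ab)^{\dagger}$ and use $(ab)^{\dagger}ab(ab)^{\dagger}=(ab)^{\dagger}$ to conclude $(ab)^{\dagger}=(ab)^{\dagger}b\core{b}=\core{a}\core{b}$.

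Part (ii) runs in parallel. In the forward direction $a^*\core{b}=a^*a(ab)^{\dagger}$ is free because $a^*a\core{a}=a^*$, while $Rab\subseteq Ra^*$ is extracted from the Hermitian equation $(\core{a}\core{b}ab)^*=\core{a}\core{b}ab$: expand, post-multiply by $a\core{a}$ using $(\core{a})^*a\core{a}=(\core{a})^*$, then pre-multiply by $ab$ and use $ab\core{a}\core{b}ab=ab$, giving $aba\core{a}=ab$. For the converse, use $Rab\subseteq Ra^*\Longleftrightarrow aba\core{a}=ab$, apply Lemma~\ref{2.1}(iii) to pass from $a^*\core{b}=a^*\big(a(ab)^{\dagger}\big)$ to $\core{a}\core{b}=\core{a}a(ab)^{\dagger}$, and then absorb $\core{a}a$: applying $*$ to $ab=aba\core{a}$ and using $\big((ab)^{\dagger}ab\big)^*=(ab)^{\dagger}ab$ gives $(ab)^{\dagger}ab=a\core{a}\,(ab)^{\dagger}ab$, after which the leftover $a\core{a}$ is upgraded to $\core{a}a$ by pre-multiplying by $\core{a}a$ and using $\core{a}a^2\core{a}=a\core{a}$, so that $(ab)^{\dagger}=(ab)^{\dagger}ab(ab)^{\dagger}=\core{a}a(ab)^{\dagger}=\core{a}\core{b}$. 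I expect the one genuinely non-routine point to be the forward direction: one must first observe that one of the two listed conditions is a free consequence of $(ab)^{\dagger}=\core{a}\core{b}$, and then tease the range inclusion out of the Penrose equations by passing through the Hermitian idempotent $ab\core{a}\core{b}$ (resp. $\core{a}\core{b}ab$) together with the auxiliary $*$-identity $b\core{b}(\core{b})^*=(\core{b})^*$ (resp. $(\core{a})^*a\core{a}=(\core{a})^*$); everything else is short bookkeeping with the core-inverse identities and the four Penrose equations for $ab$.
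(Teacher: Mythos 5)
Your proof is correct and follows essentially the same route as the paper's: in each converse direction you extract $b\core{b}ab=ab$ (resp.\ $aba\core{a}=ab$) from the ideal inclusion and absorb the resulting projection into $(ab)^{\dagger}$, and your appeal to Lemma~\ref{2.1}(iv) (resp.\ (iii)) is exactly the paper's explicit multiplication by $(\core{b})^2$ (resp.\ $\core{a}(\core{a})^*$). The only real difference is that you spell out the forward-direction inclusions $abR\subseteq bR$ and $Rab\subseteq Ra^*$ via the Hermitian idempotents $ab\core{a}\core{b}$ and $\core{a}\core{b}ab$, which the paper merely asserts.
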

\begin{proof}
\begin{enumerate}[(i)]
\item  $(ab)^{\dagger}=\core{a}\core{b}$ yields $abR\subseteq bR$. Further, $(ab)^{\dagger}b^2=\core{a}\core{b}b^2=\core{a}b.$ Conversely,  $abR\subseteq bR$ implies $ab=bz$, where $z\in R$. Pre-multiplying $b\core{b}$ in $ab=bz$, we get  $b\core{b}ab=ab$. Taking involution of $ab=b\core{b}ab$, we get $(ab)^*=(ab)^*b\core{b}$. Pre-multiplying $(ab)^{\dagger}((ab)^{\dagger})^*$ in $(ab)^*=(ab)^*b\core{b}$, we have $(ab)^{\dagger}=(ab)^{\dagger}b\core{b}.$ Post-multiplying  $(\core{b})^2$ in $\core{a}b=(ab)^{\dagger}b^2$, we get $\core{a}\core{b}=(ab)^{\dagger}b\core{b}$. Hence, $(ab)^{\dagger}=\core{a}\core{b}.$
 \item If $(ab)^{\dagger}=\core{a}\core{b}$, then  $abR\subseteq Ra^*$. Further, $a^*a(ab)^{\dagger}=a^*a\core{a}\core{b}=a^*(\core{a})^*a^*\core{b}=a^*\core{b}$, i.e., $a^*\core{b}=a^*a(ab)^{\dagger}.$
 Conversely, $Rab\subseteq Ra^*$ implies $ab=za^*$, where $z\in R.$ Post-multiplying  $(\core{a}a)^*$, we get  $ab=ab(\core{a}a)^*$. Taking involution of $ab=ab(\core{a}a)^*$, we get $(ab)^*=\core{a}a(ab)^*$. Again, post-multiplying  $((ab)^{\dagger})^*(ab)^{\dagger}$, we obtain $(ab)^{\dagger}=\core{a}a(ab)^{\dagger}$. Pre-multiplying $\core{a}(\core{a})^*$ in $a^*\core{b}=a^*a(ab)^{\dagger}$, we get $\core{a}\core{b}=\core{a}a(ab)^{\dagger}$. Hence, $(ab)^{\dagger}=\core{a}\core{b}.$

\end{enumerate}
\end{proof}

\section{Conclusion}
The important findings are summarized as follows:
\begin{itemize}
\item The forward-order laws for the core inverse and the weighted core inverse have been introduced in rings.
\item Finally, we have presented a few necessary and sufficient conditions of the hybrid forward-order law.
\end{itemize}

\section{Acknowledgements}
The first author acknowledges the support of the Council of Scientific and Industrial Research, India. We thank Aaisha Be and Vaibhav Shekhar  for their helpful suggestions on some parts of this article.

\end{document}